   \def\MR#1{}
\newtheorem{lem}{Lemma}[section]
\newtheorem{thm}[lem]{Theorem}
\newtheorem{prop}[lem]{Proposition}
\numberwithin{equation}{section}
\newtheorem*{cor*}{Corollary}
\newtheorem*{thm*}{Theorem}
\theoremstyle{definition}
\theoremstyle{remark}
\newcommand{\N}{\mathbb{N}}
\newcommand{\Z}{\mathbb{Z}}
\newcommand{\R}{\mathbb{R}}
\renewcommand{\lvert}{\left\vert}
\renewcommand{\rvert}{\right\vert}
\renewcommand{\lVert}{\left\Vert}
\renewcommand{\rVert}{\right\Vert}
\newcommand{\entropy}{\mathrm{h}_{\mathrm{Sh}}}
\title[Hausdorff dimension of particularly non-normal
numbers]{Hausdorff dimension of particularly non-normal numbers in
  dynamical systems fulfilling the specification property}
\author[M. G. Madritsch]{Manfred G. Madritsch}
\address[M. G. Madritsch]{
  \noindent 1. Universit\'e de Lorraine, Institut Elie Cartan de
  Lorraine, UMR 7502, Vandoeuvre-l\`es-Nancy, F-54506, France;\newline
  \noindent 2. CNRS, Institut Elie Cartan de Lorraine, UMR 7502,
  Vandoeuvre-l\`es-Nancy, F-54506, France}
\email{manfred.madritsch@univ-lorraine.fr}
\author[I. Petrykiewicz]{Izabela Petrykiewicz}
\address[I. Petrykiewicz]{Max Planck Institute for Mathematics,
  Vivatsgasse 7, 53111 Bonn, Germany}
\email{petrykii@mpim-bonn.mpg.de}
\subjclass[2010]{11K16 (11A63, 28A80 37B10, 37C45, 54H20)}
\keywords{Dynamical systems, specification property,
  non-normal numbers, Hausdorff dimension, $\beta$-expansions}
\date{\today}
\begin{document}

\begin{abstract}
  In this paper, we consider non-normal numbers occurring in dynamical
  systems fulfilling the specification property. It has been shown
  that in this case the set of non-normal numbers has measure zero.
  In the present papers we show that a smaller set, namely the set of
  particularly non-normal numbers, has full Hausdorff dimension. A
  particularly non-normal number is a number $x$ such that there exist
  two digits, one whose limiting frequency in $x$ exists and another
  one whose limiting frequency in $x$ does not exist.
\end{abstract}

\maketitle

\section{Introduction}

Let $N\geq2$ be an integer and $\mathcal{A}:=\{0,1,\ldots,N-1\}$. Then for every
$x\in[0,1)$ we denote by
\begin{equation}\label{eq:Nadicexpansion}
   x=\sum_{k=1}^\infty d_k(x)N^{-k},
\end{equation}
where $d_k(x)\in\mathcal{A}$ for all $k\geq1$, the unique non-terminating
$N$-ary expansion of $x$. For every positive integer $n$ and a block of digits
$\mathbf{b}=b_1\ldots b_k\in\mathcal{A}^k$ we write
\[
  P_\mathbf{b}(x,n):=\frac{\lvert\{0\leq
    i<n:d_{i+1}(x)=b_1,\ldots,d_{i+k}(x)=b_k\}\rvert}n
\]
for the frequency of the block $\mathbf{b}$ among the first $n$ digits of the
$N$-ary expansion of $x$. 

We call a number $k$\textit{-normal} if for every block
$\mathbf{b}\in\mathcal{A}^k$ of digits of length $k$, the limit of the
frequency $\lim_{n\to\infty}P_\mathbf{b}(x,n)$ exists and equals
$N^{-k}$.  A number is called \textit{normal} with respect to base $N$
if it is $k$-normal for all $k\geq1$. Furthermore, a number is called
\textit{absolutely normal} if it is normal to any base $N\geq2$. On
the other hand, a number is called \textit{essentially non-normal} if
for all $i\in A$, the limit $\lim_{n\to\infty}P_i(x,n)$ does not
exist, and \textit{particularly non-normal} if there exists $i,j\in A$
such that $\lim_{n\to\infty}P_i(x,n)$ exists, but
$\lim_{n\to\infty}P_j(x,n)$ does not exist.

Already in 1909 Borel \cite{borel1909:les_probabilites_denombrables}
showed that the set of absolutely normal numbers has Lebesgue measure
1. Recently the fractal properties of different variants of non-normal
numbers have been of interest, see for example
\cite{albeverio_pratsiovytyi_torbin2005:topological_and_fractal,
  olsen2004:applications_multifractal_divergence02,
  olsen2004:applications_multifractal_divergence01,
  olsen_winter2003:normal_and_non}.  In particular, Albeverio,
Prats'ovytyi and Torbin
\cite{albeverio_pratsiovytyi_torbin2005:singular_probability_distributions}
showed in 2005 that the set of particularly non-normal numbers is
superfractal, meaning that its Lebesgue measure is 0, whereas its
Hausdorff dimension is 1.

Instead of considering $N$-ary expansions for an integer $N$ we might
take any $\beta>1$ and represent reals in $[0,1]$ in a similar
way. These representations are called $\beta$-expansions and were
introduced by Rényi \cite{renyi1957:representations_real_numbers}. For
$\beta>1$ let $S_\beta\colon[0,1]\to[0,1]$ be the transformation given
by
\begin{equation*}
S_\beta(x)=\beta x\mod 1
\end{equation*}
and $\mathcal{D}_\beta=\{0,\ldots,\lfloor\beta\rfloor \}$ be the
corresponding set of digits. Then every $x\in[0,1]$ admits a unique
representation of the form
\begin{equation*}
x=\sum_{n=1}^\infty \frac{d_n}{\beta^n},
\end{equation*}
with
$d_n=\lfloor \beta S^{n-1}_\beta x \rfloor \in \mathcal{D}_\beta$. For
$\beta>1$ and $x\in[0,1]$ we denote by
$\mathrm{d}_\beta(x)=d_1d_2d_3\ldots\in\mathcal{D}_\beta^\N$ the
$\beta$-expansion of $x$. We note that for $\beta=N\in\Z$ this yields
the $N$-adic expansion described above.


Let $\mathcal{L}_\beta$ denote the set of all $\beta$-expansions of
elements in $[0,1]$. A finite word (resp. a sequence) is called
$\beta$-admissible if it is a factor of an element (resp. an element)
of $\mathcal{L}_\beta$.

Every $x\in[0,1]$ has a $\beta$-expansion, however, not every sequence
$u\in\mathcal{D}_\beta^\N$ occurs in $\mathcal{L}_\beta$. In
connection with a characterization
Parry~\cite{parry1960:eta_expansions_real} observed that the expansion
of $1$ plays a crucial role. In particular, a sequence
$u\in\mathcal{D}_\beta^\N$ occurs as $\beta$-expansion if and only if
\begin{enumerate}
\item the $\beta$-expansion of $1$ is infinite, then
$S^k(u)\prec_{\textrm{lex}}d_\beta(1)$ for all $k\geq0$,
\item the $\beta$-expansion of $1$ is finite, $\textit{i.e.}$
  $d_\beta(1)=d_1d_2\ldots d_k0000\ldots$, then
  $S^k(u)\prec_{\textrm{lex}}\overline{d_1\ldots d_k}$ for all $k\geq0$,
\end{enumerate}
where $S$ denotes the shift and $\prec_{\textrm{lex}}$ denotes the
lexicographic ordering. According to this one calls $\beta>1$ a Parry
number if $d_\beta(1)$ is eventually periodic. In this case
$\beta$-expansions fulfills the so-called specification property which
we will defined in the following section.

The transformation $S_\beta$ is ergodic, and the invariant measure for
$S_\beta$ is given by
\begin{equation*}
\nu_\beta(B)=\int_B \Big(\frac{1}{F(\beta)} \sum_{\substack{n\geq 0\\ x< S^n_\beta 1}} \frac{1}{\beta^n} \Big)dx,
\end{equation*}
where
\begin{equation*}
F(\beta)=\int_0^1 \Big( \sum_{\substack{n\geq 0\\ x< S^n_\beta 1}}
\frac{1}{\beta^n} \Big)dx
\end{equation*}
is the normalising constant (\textit{cf.} Dajani and Kraaikamp
\cite{dajani_kraaikamp2002:ergodic_theory_numbers}). Using this
measure one can easily carry over the definitions of normal and
non-normal numbers to $\beta$-expansions. It has been shown
in~\cite{madritsch2014:non_normal_numbers} for full shifts and more
generally in \cite{madritsch_petrykiewicz2014:non_normal_numbers} for
systems fulfilling the specification property (under some mild technical
conditions) that the set of essentially non-normal numbers is residual
or comeagre (a countable intersection of everywhere dense sets).

In the present paper, we calculate the Hausdorff dimension of sets of
particularly non-normal numbers in dynamical systems fulfilling the
specification property. To this end we will introduce the necessary
notation for the statement of the results in the following
section. Then we construct a subset of the set of numbers, show that
they are all particularly non-normal numbers and calculate their
Hausdorff dimension in the last section.

\section{Notation and results}
Our notation follows Lind and Marcus
\cite{lind_marcus1995:introduction_to_symbolic}. Let $M$ be a compact
metric space and let $\varphi\colon M\to M$ be a continuous
mapping. Then $(M,\varphi)$ is called a topological dynamical
system.

For $x\in M$ we denote by $\delta_x$ the Dirac measure, which is
concentrated on $x$. The empirical measure (of order $n$) is defined
as \[T_n(x):=\frac1n\sum_{k=0}^{n-1}\delta_{\varphi^k(x)}.\]
Note that for a given subset $U\subset M$, $T_n(x)[U]$ is the
probability of $\varphi^k(x)$ being in $U$ for $0\leq k< n$,
\textit{i.e.}\[T_n(x)[U]=\frac1n\#\{0\leq k<n\colon \varphi^k(x)\in
U\}.\]
If $M$ is the torus $\mathbb{T}=\R/\Z$ and $U\subset\mathbb{T}$ is a
subinterval, then this corresponds to questions on uniform distribution
of the sequence $\{\varphi^k(x)\}_k$. We refer the interested reader
to Kuipers and
Niederreiter~\cite{kuipers_niederreiter1974:uniform_distribution_sequences},
Drmota and Tichy~\cite{drmota_tichy1997:sequences_discrepancies_and}
or Bugeaud~\cite{bugeaud2012:distribution_modulo_one} for more details
on uniform distribution.

In the present paper we consider normal numbers and therefore we need
a ``digital'' structure on the set $M$. To this end let
$\mathcal{P}=\{P_0,\ldots,P_{N-1}\}$ be a collection of disjoint open
subsets of $M$. Then we call $\mathcal{P}$ a topological partition of
$M$ if it is the union of the closures of the $P_i$, \textit{i.e.}
\[M=\overline{P_0}\cup\cdots\cup\overline{P_{N-1}}.\]
For the rest of the paper we fix a topological dynamical system
$(M,\varphi)$ together with a topological partition $\mathcal{P}$. We
note that fixing $\mathcal{P}$ and $\varphi$ suffices.

Now we construct the associated symbolic dynamical system. Let
$\mathcal{A}=\{0,1,\ldots,N-1\}$ be a finite set, the alphabet. We denote by
$\mathcal{A}^\N$ the set of infinite words equipped with the product
discrete topology. Furthermore let $S\colon\mathcal{A}^\N\to \mathcal{A}^\N$ be the shift
operator, \textit{i.e.} for $\omega=a_1a_2a_3\ldots\in \mathcal{A}^\N$
\[S(\omega)=a_2a_3a_4\ldots.\]

Let $\gamma\in\mathcal{A}^k$ be a finite word, then we write
$\lvert\gamma\rvert=k$ for its length. Furthermore for every
$\omega=a_1a_2a_3\ldots\in \mathcal{A}^\N$ and $n\geq1$ a positive integer, we denote
by $\omega\vert_n=a_1a_2\ldots a_n$ the truncation of $\omega$ to the
first $n$ letters. For a finite word $\omega=a_1a_2\ldots a_k\in A^k$
we denote by $[\omega]$ the corresponding cylinder set of order $k$,
\textit{i.e.} the set of all infinite words starting with the same
letters as $\omega$,
\[[\omega]=\{\gamma=b_1b_2b_3\ldots\in A^\N\colon a_i=b_i\text{ for
}1\leq i\leq k\}.\]

Let $\mathcal{A}^*=\bigcup_{k=1}^\infty\mathcal{A}^k$ be the set of all
finite words. A subset $\mathcal{L}\subset\mathcal{A}^*$ is called a
language. We call a word $\omega=a_1\ldots a_n$ allowed for the pair
$(\mathcal{P},\varphi)$ if the set
$\bigcap_{j=1}^n\varphi^{-j}P_{a_j}$ is not empty. Then we denote by
$\mathcal{L}:=\mathcal{L}_{\mathcal{P},\varphi}$ the set of all
allowed words for the pair $(\mathcal{P},\varphi)$. There is a unique
shift space $X:=X_{\mathcal{P},\varphi}$ whose language is
$\mathcal{L}_{\mathcal{P},\varphi}$ and we call $X$ the symbolic
dynamical system corresponding to $(\mathcal{P},\varphi)$.

We denote by $\mathcal{M}_S$ the set of all shift invariant
probability measures. A measure $\mu$ is called shift invariant if for
all measurable sets $A$ we have that
$\mu(A)=\mu(S^{-1}A)$. Furthermore we denote by $\mathfrak{B}$ the set
of Borel sets that is generated by the cylinder sets. Then for any
$\mu\in\mathcal{M}_S$ the tuple $(X,\mathfrak{B},S,\mu)$ describes a
measure theoretical dynamical system.

By abuse of notation we denote by $T_n$ also the empirical measure in $X$:
\[T_n(\omega):=\frac1n\sum_{k=0}^{n-1}\delta_{S^k(\omega)}.\]
Then for each block $\mathbf{b}\in \mathcal{A}^k$, $T_n(\omega)([\mathbf{b}])$ counts
the number of occurrences of the block $\mathbf{b}$ among the first
$n$ digits. This yields the identity
\[P_{\mathbf{b}}(\omega,n)=T_n(\omega)([\mathbf{b}]).\]
Now we can pull over the definition of particularly non-normal
numbers. We call an infinite word $\omega\in X$ particularly
non-normal if there exist two letters $i,j\in\mathcal{A}$ such that
\[\lim_{n\to\infty} T_n(\omega)([i])\text{ exists}\quad\text{and}\quad
\lim_{n\to\infty} T_n(\omega)([j])\text{ does not exist.}\]

We denote by $\mathcal{L}_n$ the subset of words of length at most
$n$, \textit{i.e.} $\mathcal{L}_n:=\{w\in \mathcal{A}^*\colon \lvert
w\rvert\leq n\}$.
Then the Shannon entropy of $\alpha\in\mathcal{M}_S$ is defined as
\[\entropy(\alpha):=\lim_{n\to\infty}-\frac1n \sum_{w\in\mathcal{L}_n}\alpha([w])\log(\alpha([w])).\]
A measure $\nu\in\mathcal{M}_S$ is called maximal (or an equilibrium
state) if it maximises this entropy. For a symbolic dynamical system
fulfilling the specification property there need not be a unique
maximal ergodic measure, however, the set of ergodic measures is a
dense $G_\delta$ subset of $\mathcal{M}$ (\textit{cf.} Proposition
21.9 of Denker \textit{et al.}
\cite{denker_grillenberger_sigmund1976:ergodic_theory_on} or Sigmund
\cite{sigmund1970:generic_properties_invariant}). For the case of
$\beta$-expansions Bertrand-Mathis
\cite{bertrand-mathis1988:points_generiques_de} showed that the
Champernowne word is generic for the unique maximal measure.

For showing that an element $\omega\in\mathcal{A}^\N$ is in fact
particularly non-normal we we look at the limit-points of the
frequency vectors $P_{\mathbf{b}}(\omega,n)$. On the other hand in
order to calculate the Hausdorff-dimension we consider measures of
cylinder sets. In particular, we need information on the limit-points
of the empirical measure of the constructed words.
Let $f\colon\mathcal{A}^\N\to\R$. Then we call $f$ local if there
exist indices $1\leq i\leq j<\infty$ such that the function values
coincide whenever the sub word on position $i$ up to $j$ coincides,
\textit{i.e.} $f(\omega)=f(\eta)$ whenever $\omega_k=\eta_k$ for
$i\leq k\leq j$. Let $\mathfrak{B}_n$ be the $\sigma$-algebra
generated by $\mathcal{L}_n$ and let $\mathcal{F}_n$ be the set of
local functions which are $\mathfrak{B}_n$-measurable.
Now our topology on $\mathcal{M}$, which coincides with the
weak\textsuperscript{*} topology, is given by the norm
\[\lVert\rho\rVert:=\sum_{n=1}^\infty2^{-n}\lVert\rho\rVert_{\mathrm{TVn}},
\quad\text{where}\quad
\lVert\rho\rVert_{\mathrm{TVn}}:=\sup_{f\in\mathcal{F}_n,\lVert
  f\rvert\leq1}\lvert\langle f,\rho\rangle\rvert.\]

For each $\omega\in\mathcal{A}^\N$ the sequence $\{T_n(\omega)\}_n$
has limit-points, which are clearly $S$-invariant probability
measures. If for an $\omega\in\mathcal{A}^\N$ the sequence
$\{T_n(\omega)\}_n$ has only one limit point $\alpha$, then we call
$\omega$ normal with respect to the measure $\alpha$ (or generic for
the measure $\alpha$). Furthermore we call $\omega\in\mathcal{A}^\N$
normal if it is normal with respect to the maximal measure.

Before we start with our construction, we present some hypothesis which
we suppose for the rest of this paper. All of them are fulfilled by
our motivating example -- the $\beta$-expansion.

\begin{itemize}
\item[\textbf{H1}] \textit{Different measures:} Let
  $\nu\in\mathcal{M}_S$ be the maximal measure and
  $\mu\in\mathcal{M}_S$ be any measure different from the maximal one
  such that there exists $i,j\in\{0,\ldots,N-1\}$ with
\[\mu([i])>\nu([i])\quad\text{and}\quad\mu([j])=\nu([j]).\]
Without loss of generality, we assume that $i=0$ and $j=1$.
\item[\textbf{H2}] \textit{Specification property:} 
There exists $C \in \N$ such that 
for any pair $a,b\in\mathcal{L}$ there exists $v\in\mathcal{L}$
with $\lvert v\rvert\leq C$ such that
$avb\in\mathcal{L}$. We will write $a\odot b:=avb$ for short.
\item[\textbf{H3}] \textit{Approximation property:} The maximum
  measure $\nu\in\mathcal{M}_S$ is non-atomic. There exists a
  continuous non-negative function $e_\nu$ on $\mathcal{L}$ such that
  \begin{gather}\label{ps:eq2.1}
  \limsup_n\sup_{\omega\in\mathcal{L}}\lvert\frac1n\log\nu[\omega\vert_n]+\langle
  e_\nu,T_n(\omega)\rangle\rvert=0
  \end{gather}
  and
  \[\exists C_\nu>0\quad\text{such that }\langle e_\nu,\rho\rangle\geq
  C_\nu\quad\forall\rho\in\mathcal{M}_S.\]
\end{itemize}

\textbf{Comments:}
\begin{itemize}
\item The hypothesis \textbf{H1} guarantees that we have words which
  tend to the different measures. It is clear from that, that there
  have to be at least 3 letters since otherwise $\nu([0])+\nu([1])=1$
  and if $P_0(\omega,n)$ has a limit, then also $P_1(\omega,n)$.
\item We note that we may have relaxed $\textbf{H2}$ by letting the
  length of the word $v$ depend on the length of the left and right
  word, respectively. However, this makes notation uglier and so we
  omitted it. For a different definition of the specification
  property, we refer the interested reader to Chapter 22 of
  \cite{denker_grillenberger_sigmund1976:ergodic_theory_on}.
\end{itemize}

\begin{thm}\label{thm:HausdorffDim}
  Let $(M,\varphi)$ be a topological dynamical system. Suppose that
  $\mathcal{P}$ is a finite topological partition of $M$ such that the
  associated symbolic dynamical system $(X,S)$ satisfies \textbf{H1},
  \textbf{H2} and \textbf{H3}. Then the set of particularly non-normal
  numbers has full Hausdorff dimension.
\end{thm}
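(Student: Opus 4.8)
The plan is to build explicitly a large family of particularly non-normal words by concatenating long blocks that alternately approximate the maximal measure $\nu$ and the auxiliary measure $\mu$ from \textbf{H1}, switching between the two regimes along a very sparse (fast-growing) sequence of break-points. Since $\mu([1])=\nu([1])$, the frequency $T_n(\omega)([1])$ of the digit $1$ will be squeezed toward the common value $\nu([1])$ along the whole sequence and hence converge, while $T_n(\omega)([0])$ will oscillate between values near $\nu([0])$ and near $\mu([0])>\nu([0])$, so it cannot converge. That makes every word in the family particularly non-normal. The construction must be carried out inside $X$: when passing from an approximating block $a$ for $\nu$ to one for $\mu$ we glue them using the specification word, $a\odot b=avb$ with $|v|\le C$ from \textbf{H2}; the cost of these gluings is negligible because the blocks grow in length, so the glue digits contribute a vanishing proportion.

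First I would fix, for each $n$, a subset $A_n\subset\mathcal{L}$ of words of length $n$ whose empirical measures $T_n(\omega)$ are within $\varepsilon_n\to 0$ of $\nu$, and a subset $B_n\subset\mathcal{L}$ of length $n$ whose empirical measures are within $\varepsilon_n$ of $\mu$; the existence of arbitrarily long such words, in numbers that capture the full entropy of $\nu$ respectively a definite entropy $h(\mu)<h(\nu)$, follows from the specification property together with the standard counting/variational facts (the map $\omega\mapsto T_n(\omega)$ is surjective onto a dense set of $\mathcal{M}_S$, and $\nu$ being maximal, $\#A_n=e^{n(h(\nu)+o(1))}$). Then I would choose break-points $N_1\ll N_2\ll N_3\ll\cdots$ growing fast enough that each new block dwarfs the accumulated length of all previous blocks, and define
\[
\Gamma:=\Bigl\{\,\omega=w_1\odot w_2\odot w_3\odot\cdots : w_{2r-1}\in A_{\ell_{2r-1}},\ w_{2r}\in B_{\ell_{2r}}\,\Bigr\},
\]
where $\ell_k=N_k-N_{k-1}$ (up to the $O(C)$ glue corrections). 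Along indices $n\approx N_{2r-1}$ the word ends in a long $\nu$-block, so $T_n(\omega)([0])\to\nu([0])$ along that subsequence; along $n\approx N_{2r}$ it ends in a long $\mu$-block, so $T_n(\omega)([0])$ is near $\mu([0])$; since these differ, $\lim_n T_n(\omega)([0])$ does not exist. For the digit $1$: on a $\nu$-block the local frequency is near $\nu([1])$, on a $\mu$-block near $\mu([1])=\nu([1])$, and the Cesàro-type averaging over the prefix (dominated by the most recent huge block) forces $T_n(\omega)([1])\to\nu([1])$ for every $n$. Hence $\Gamma$ consists entirely of particularly non-normal words.

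It remains to show $\dim_H\Gamma=\dim_H X$ (which equals $1$ in the $\beta$-expansion normalization, and in general the dimension associated with the maximal measure). Here I would use \textbf{H3}: for $\omega\in\Gamma$ we have $-\frac1n\log\nu[\omega|_n]=\langle e_\nu,T_n(\omega)\rangle+o(1)$, and since the empirical measures of words in $\Gamma$ spend almost all their mass in the $\nu$-regime (the $\mu$-blocks, although infinitely many, occupy a proportion that tends to $0$ by the fast growth of $N_k$), we get $\langle e_\nu,T_n(\omega)\rangle\to\langle e_\nu,\nu\rangle=h(\nu)$, so $\nu[\omega|_n]=e^{-n(h(\nu)+o(1))}$ uniformly on $\Gamma$. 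Combined with the entropy-rate count $\#A_n=e^{n(h(\nu)+o(1))}$, a standard mass-distribution argument (put on $\Gamma$ the measure that distributes uniformly over the choices $w_{2r-1}\in A_{\ell_{2r-1}}$ at each odd stage — the even stages carry positive but negligible entropy $h(\mu)$ and do not lower the dimension because their length-fraction vanishes) yields a Frostman measure with local dimension $h(\nu)/\|e_\nu\|$-type exponent equal to the dimension of $X$, giving $\dim_H\Gamma\ge\dim_H X$, and the reverse inequality is trivial. The positivity $\langle e_\nu,\rho\rangle\ge C_\nu>0$ from \textbf{H3} is what keeps all these exponents bounded away from $0$ and makes the metric estimates (cylinder diameters comparable to $\nu$-measures) legitimate.

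The main obstacle I expect is the bookkeeping that makes the glue words and the $\mu$-blocks genuinely negligible simultaneously in two different senses — negligible for the \emph{dimension} estimate (their total length-fraction $\to 0$, so they don't dilute the entropy exponent) yet \emph{present infinitely often and of growing absolute length} so that they actually derail the convergence of $T_n(\omega)([0])$. This forces a careful interleaved choice of the $N_k$: each $N_{2r}$ must be large enough, relative to $N_{2r-1}$, that at $n\approx N_{2r}$ the freshly appended $\mu$-block dominates the prefix and pushes $T_n([0])$ close to $\mu([0])$, while $N_{2r+1}$ must be even larger so that the next $\nu$-block washes that out again; and at the same time $\sum_r \ell_{2r}/N_k\to 0$. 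Getting a single growth schedule that does all of this, and verifying the uniform $o(1)$ errors in \textbf{H3} along it, is the technical heart; everything else is the specification gluing and a routine mass-distribution (Billingsley) lemma.
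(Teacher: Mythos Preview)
Your overall strategy --- gluing $\nu$-approximating and $\mu$-approximating blocks via the specification property so that $P_1$ converges to the common value $\nu([1])=\mu([1])$ while $P_0$ oscillates --- is exactly the paper's idea. But the growth schedule you propose contains an internal contradiction that breaks the dimension estimate. You require that at $n\approx N_{2r}$ the freshly appended $\mu$-block ``dominates the prefix'' so that $T_{N_{2r}}(\omega)([0])$ is near $\mu([0])$; this forces $\ell_{2r}/N_{2r}\to 1$. You then also require the total $\mu$-fraction $\sum_r\ell_{2r}/N_k$ to vanish so that the entropy count is undiluted. These are not simultaneously achievable at the scales $n=N_{2r}$: there the $\mu$-fraction is essentially $1$, so by \textbf{H3} one has $-\log\nu([\omega\vert_{N_{2r}}])\approx N_{2r}\langle e_\nu,\mu\rangle$, while your Frostman measure satisfies $-\log\lambda([\omega\vert_{N_{2r}}])\approx N_{2r-1}\,\entropy(\nu)+\ell_{2r}\,\entropy(\mu)$. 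The resulting local-dimension ratio at those scales is at most $\entropy(\mu)/\langle e_\nu,\mu\rangle$, strictly below the full value, so the mass-distribution lemma does not deliver full dimension. Your closing paragraph names this tension as ``the main obstacle'', but it is not merely bookkeeping: no single growth schedule can make both requirements hold at every scale.

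The paper resolves this by abandoning the attempt to reach dimension $1$ with one set. It builds a one-parameter family $G_p$: within each stage $j$ it places the $\mu$-part first (long enough to dominate everything before stage $j$, so that $P_0$ reaches $\mu([0])$) and then appends a $\nu$-part exactly $p$ times as long (so that $P_0$ relaxes to $\tfrac{1}{p+1}\mu([0])+\tfrac{p}{p+1}\nu([0])$). Thus $P_0$ genuinely oscillates, while the $\nu$-material occupies a \emph{fixed} fraction $\tfrac{p}{p+1}$ of every prefix, including the bad scales. A covering-lemma computation then gives $\dim_H G_p\ge \tfrac{p}{p+1}$, and letting $p\to\infty$ yields full dimension for the union. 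The missing idea in your proposal is precisely this trade-off: accept a dimension deficit $1/(p+1)$ in exchange for a fixed, nonvanishing oscillation amplitude, and recover full dimension only in the limit over $p$.
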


As an example of a dynamical system satisfying \textbf{H1},
\textbf{H2} and \textbf{H3} we consider the $\beta$-expansions from
the introduction.

\begin{thm}
  Let $\beta>2$ be a Parry number. Let $(X,S)$ be the symbolic dynamical
  system corresponding to the $\beta$-expansion. Then the set of
  particularly non-normal numbers has full Hausdorff dimension.
\end{thm}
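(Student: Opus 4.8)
The plan is to realise the $\beta$-shift of a Parry number $\beta>2$ as an instance of the setting of Theorem~\ref{thm:HausdorffDim} and then invoke that theorem. Concretely I would take $M=X_\beta\subseteq\mathcal{D}_\beta^{\N}$, the $\beta$-shift (compact in the product topology), $\varphi=S$ the shift, and $\mathcal{P}=\{[0]\cap X_\beta,\dots,[\lfloor\beta\rfloor]\cap X_\beta\}$ the partition into one-cylinders, which are disjoint and clopen; then $\mathcal{P}$ is a finite topological partition, the associated symbolic system is $X_\beta$ itself, and $\mathcal{L}$ is the set of $\beta$-admissible words. Under the coding map $\pi(\omega)=\sum_k\omega_k\beta^{-k}$, which is injective off the countable set of $\beta$-adic rationals and preserves Hausdorff dimension, a word of $X_\beta$ is particularly non-normal exactly when the real it codes is, so it remains only to verify \textbf{H1}, \textbf{H2} and \textbf{H3}. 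Throughout I would use the following consequence of $\beta$ being Parry: writing $S_\beta^{|a|}([a])=[0,r(a))$ for the rescaled image of the rank-$|a|$ fundamental interval of an admissible word $a$, the set $\{r(a):a\in\mathcal{L}\}$ is finite --- it is controlled by the finite orbit $\{S_\beta^k1:k\ge0\}$ --- so $\rho_0:=\min\{r(a):a\in\mathcal{L}\}>0$.

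For \textbf{H2}, fix $K\in\N$ with $\beta^{-K}\le\rho_0$. For an admissible word $a$, the points of $[0,r(a))$ whose expansion begins with $K$ zeros form the interval $[0,\beta^{-K})$ (since $\beta^{-K}\le\rho_0\le r(a)$), and $S_\beta^K$ maps this interval onto all of $[0,1)$; hence $S_\beta^{|a|+K}$ maps $[a0^K]$ onto $[0,1)$, i.e.\ $[a0^K]$ is a full cylinder, and therefore $a0^Kb$ is admissible for every admissible $b$. Thus \textbf{H2} holds with $C=K$ and $a\odot b:=a0^Kb$.

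For \textbf{H1}, note that since $\beta>2$ the first digit of the (quasi-greedy) expansion of $1$ equals $\lfloor\beta\rfloor\ge2$, so every sequence over $\{0,1\}$ is $\beta$-admissible and the full two-shift sits inside $X_\beta$; consequently, for each $p\in[0,1]$ the Bernoulli measure $\mu_p$ with $\mu_p([1])=p$ and $\mu_p([0])=1-p$ is an $S$-invariant probability measure on $X_\beta$. I would take $\mu:=\mu_{\nu([1])}$, where $\nu=\nu_\beta$ is the Parry measure, which is the unique maximal measure of $X_\beta$. Then $\mu([1])=\nu([1])$, while $\mu([0])=1-\nu([1])>\nu([0])$ because $\nu_\beta$ has full support, so $\nu([2])>0$ and hence $\nu([0])+\nu([1])<1$; and $\entropy(\mu)\le\log2<\log\beta=\entropy(\nu)$, so $\mu$ is different from the maximal measure. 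This is \textbf{H1} with $i=0$, $j=1$.

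For \textbf{H3}, the maximal measure $\nu_\beta$ has density $\tfrac1{F(\beta)}\sum_{x<S_\beta^n1}\beta^{-n}$, which lies between $\tfrac{\beta-1}{\beta}$ and $\tfrac{\beta}{\beta-1}$ (the term $n=0$ always contributes $1$ and the full sum is at most $\sum_{n\ge0}\beta^{-n}$); hence $\nu_\beta$ is equivalent to Lebesgue measure with bounded Radon--Nikodym derivative, so it is non-atomic and $\nu_\beta([\omega\vert_n])\asymp\lvert I_n(\omega)\rvert$ uniformly, where $I_n(\omega)$ is the rank-$n$ fundamental interval. Since $\lvert I_n(\omega)\rvert=\beta^{-n}r(\omega\vert_n)$ with $\rho_0\le r(\omega\vert_n)\le1$, we obtain $\rho_0\beta^{-n}\le\lvert I_n(\omega)\rvert\le\beta^{-n}$ uniformly, and therefore $\tfrac1n\log\nu_\beta([\omega\vert_n])\to-\log\beta$ uniformly in $\omega\in\mathcal{L}$. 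This is precisely \eqref{ps:eq2.1} for the constant local function $e_\nu\equiv\log\beta$, which is continuous, non-negative, and satisfies $\langle e_\nu,\rho\rangle=\log\beta>0$ for every $\rho\in\mathcal{M}_S$, so \textbf{H3} holds with $C_\nu=\log\beta$. With \textbf{H1}--\textbf{H3} in hand, Theorem~\ref{thm:HausdorffDim} yields the assertion. The point I expect to be the crux is the uniform lower bound $\lvert I_n(\omega)\rvert\ge\rho_0\beta^{-n}$ shared by \textbf{H2} and \textbf{H3}: the upper bound is automatic, but the lower bound --- equivalently $\rho_0>0$ --- is exactly where the hypothesis that $\beta$ is Parry is indispensable and where the argument would fail for a general $\beta>2$.
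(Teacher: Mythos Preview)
Your proof is correct and follows the same overall strategy as the paper --- verify \textbf{H1}--\textbf{H3} for the $\beta$-shift and invoke Theorem~\ref{thm:HausdorffDim} --- but the details differ in each hypothesis. For \textbf{H1}, the paper pushes the Parry measure $\nu$ forward under the coordinatewise digit map $F$ sending $1\mapsto0$ and fixing the other digits, obtaining $\mu([0])=\nu([0])+\nu([1])>\nu([0])$ and $\mu([d])=\nu([d])$ for all $d\ge2$; you instead exploit that the full $2$-shift on $\{0,1\}$ embeds in $X_\beta$ and take the Bernoulli measure with $\mu([1])=\nu([1])$. Both constructions work; yours keeps $j=1$ as the ``stable'' digit rather than relabelling, and is perhaps more transparent. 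For \textbf{H2} and \textbf{H3} the paper simply observes that Parry $\beta$-shifts are sofic and cites \cite[Section~5]{pfister_sullivan2003:billingsley_dimension_on} for both properties, whereas you supply self-contained elementary arguments based on the uniform lower bound $\rho_0>0$ for rescaled cylinder lengths (equivalently, finitely many follower sets). Your route is longer but makes explicit exactly where the Parry hypothesis is used and yields the concrete choices $a\odot b=a0^Kb$ and $e_\nu\equiv\log\beta$; the paper's route is terser but leans on the external reference.
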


\begin{proof}
Let $\nu\in \mathcal{M}_{S}$ be the maximal measure. 
Without loss of generality, assume that $\nu([0])\neq 0$ and $\nu([1])\neq 0$. 
Define $\mu$ as follows. 
If 1 is a factor of $\omega \in \mathcal{L}$, then $\mu([\omega])=0$. 
Otherwise if 0 is not a factor of $\omega$, then $\mu([\omega])=\nu([\omega])$.
Finally, if 0 is a factor and 1 is not a factor of $\omega$, 
then considering the map $F:\mathcal{D}_\beta\to\mathcal{D}_\beta\setminus\{1\}$ 
which maps $1 \mapsto 0$ and $d\mapsto d$ for $d\neq 1$, we let
$\mu([\omega])=\sum_{u\in F^{-1}(\omega)\cap \mathcal{L}} \nu([u])$.
The constructed measure $\mu \in \mathcal{M}_S$ satisfies \textbf{H1}.
Moreover, if $\beta$ is a Parry number, then the system is sofic
by definition. By \cite[Section 5]{pfister_sullivan2003:billingsley_dimension_on},
the sofic shifts satisfy \textbf{H2} and \textbf{H3}.
The result then follows by Theorem~\ref{thm:HausdorffDim}.
\end{proof}

\section{The construction}

The aim of this section is to construct for every positive integer $p$
a set $G_p$ consisting of particularly non-normal numbers and having
Hausdorff dimension $\frac{p}{p+1}$. In particular, the elements of
the set $G_p$ will consist of two parts:
\begin{enumerate}
\item A fixed part constructed by concatenating parts of the sequences
  $\omega$ whose empirical measure tends to $\mu$ and
\item a changing part constructed by concatenating words whose empirical
  measure tends to $\nu$.
\end{enumerate}
We start by defining sets of finite words, whose empirical measure
tends to a given measure $\alpha$.

\begin{prop}[{\cite[Theorem
    2.1]{pfister_sullivan2003:billingsley_dimension_on}},
{\cite[Lemma 2.4]{lewis_pfister_russell+1997:reconstruction_sequences_and}}]
\label{ps:thm2.1}
Let $\alpha\in\mathcal{M}_S$. Then there exists a sequence $\{\Gamma(\alpha,n)\subset \mathcal{A}^n\colon n\in\N\}$ such that
\[\lim_{n\to\infty}\frac1n\log\lvert\Gamma(\alpha,n)\rvert=\entropy(\alpha),\]
and for all $\varepsilon>0$ there exists $N\in\N$ such that for all $n\geq N$ we have
\begin{equation}\label{eq:convtodistribution}
|P_j(\omega)-\alpha([j])|\leq \varepsilon
\end{equation}
for $j\in\{0,1\}$ and all $\omega\in \Gamma(\alpha,n)$.
\end{prop}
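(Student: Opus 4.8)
The plan is to prove Proposition~\ref{ps:thm2.1} by combining a counting/entropy estimate for the number of words with a prescribed empirical distribution with the specification property \textbf{H2}, which lets us ``splice'' short admissible words together without losing the frequency control.

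First I would recall the classical type-counting (method of types) estimate. For a measure $\alpha\in\mathcal{M}_S$ and a length $k$ (chosen so that $\alpha$ is well approximated by a $k$-step Markov/block measure), the number of words $w\in\mathcal{A}^n$ whose empirical block-distribution of order $k$ is within $\delta$ of that of $\alpha$ grows like $\exp(n(\entropy(\alpha)+o(1)))$ as $\delta\to0$ and $n\to\infty$; this is just Stirling's formula applied to multinomial coefficients, together with the definition of Shannon entropy as a limit of normalized block entropies. The one-dimensional frequency control \eqref{eq:convtodistribution} for $j\in\{0,1\}$ is a consequence of controlling the order-$1$ empirical distribution, so restricting attention to letter frequencies rather than full block frequencies already suffices for the stated conclusion; keeping the finer order-$k$ information is only needed if one later wants genericity for $\alpha$ itself, which is not claimed here.

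The key subtlety is that not every word in $\mathcal{A}^n$ lies in the shift space $X$; we must produce words that are \emph{admissible}. This is exactly where \textbf{H2} enters: take a long admissible ``seed'' word $u\in\mathcal{L}$ whose empirical measure is close to $\alpha$ (such a word exists because the ergodic measures, hence in particular approximations to any $\alpha\in\mathcal{M}_S$, are realized by generic points, or more concretely because $X$ has words of every sufficiently large type-class up to boundary effects of size $C$). Then form concatenations $u\odot u\odot\cdots\odot u$ of many copies, inserting gluing words $v$ of length $\le C$ between consecutive blocks. Since each inserted $v$ has bounded length, its contribution to the empirical measure of a concatenation of $m$ blocks each of length $\ell$ is $O(C/\ell)$, which we can make arbitrarily small by taking $\ell$ large; and the number of such concatenations is at least (roughly) $|\Gamma|^m$ for an appropriate type-class count $|\Gamma|$ of length-$\ell$ admissible words, so the exponential growth rate is preserved. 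Padding by at most $\ell+C$ letters at the end lets us reach every sufficiently large $n$. Setting $\Gamma(\alpha,n)$ to be this family of admissible length-$n$ words then gives both $\frac1n\log|\Gamma(\alpha,n)|\to\entropy(\alpha)$ and the uniform frequency bound \eqref{eq:convtodistribution}.

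The main obstacle is bookkeeping the two competing error terms simultaneously: one needs the block length $\ell$ large enough that the gluing words (total length $\le mC$ out of $\sim m\ell$) perturb frequencies by less than $\varepsilon$ and reduce the entropy rate by less than $\varepsilon$, while still being able to hit an arbitrary target length $n$ exactly. This is handled by a standard two-parameter limiting argument (first fix $\varepsilon$, choose $\ell$, then let $n\to\infty$ so $m=m(n)\to\infty$), and the bounded-gap hypothesis \textbf{H2} is precisely what makes the gluing cost negligible in the limit. Since the statement is quoted from \cite{pfister_sullivan2003:billingsley_dimension_on} and \cite{lewis_pfister_russell+1997:reconstruction_sequences_and}, I would in practice cite those sources for the detailed estimates and only sketch this construction.
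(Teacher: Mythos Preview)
The paper does not supply its own proof of Proposition~\ref{ps:thm2.1}: the statement is quoted verbatim as a citation of \cite[Theorem~2.1]{pfister_sullivan2003:billingsley_dimension_on} and \cite[Lemma~2.4]{lewis_pfister_russell+1997:reconstruction_sequences_and}, and no argument follows. So there is nothing in the paper to compare your sketch against beyond the bare references, and your closing remark---that in practice one would simply cite those sources---is exactly what the authors do.

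Your outline (method of types for the entropy count, specification \textbf{H2} to glue admissible pieces, bounded gluing cost $O(C/\ell)$ for both the frequency and entropy errors) is indeed the skeleton of the argument in the cited papers. One point deserves tightening: you first speak of a single ``seed'' word $u$ and concatenations $u\odot u\odot\cdots\odot u$, but repeating one word gives only polynomially many outputs (from the choices of gluing words), not $e^{n\entropy(\alpha)}$ many. What you actually need---and what you allude to a sentence later with ``$|\Gamma|^m$''---is an exponentially large family of \emph{admissible} length-$\ell$ words whose empirical measures are uniformly close to $\alpha$. Establishing that such a family exists with growth rate $\entropy(\alpha)$ is precisely the content of the reconstruction-sequence lemma in \cite{lewis_pfister_russell+1997:reconstruction_sequences_and}; it does not follow directly from na\"ive type counting on $\mathcal{A}^\ell$, since most type-class representatives need not lie in $\mathcal{L}$. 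Once that input is granted, the rest of your bookkeeping (concatenate $m$ blocks, pad to length $n$, absorb $O(mC)$ gluing letters) is correct and standard.
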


Corollary 21.15 of Denker \textit{et al.}
\cite{denker_grillenberger_sigmund1976:ergodic_theory_on} tells us
that every measure has a generic word. Therefore we choose $\omega \in X$
such that $P_i(\omega,n)\to\mu([i])$ as $n\to \infty$ for
$i\in\{0,1\}$ (the fixed part) and let $(\Gamma(\nu,n))_n$ be given as
in Proposition~\ref{ps:thm2.1} (the changing part). Since we need to
show that the different parts of the sequences in the set $G_p$ tend
to different measures, we have to keep track of their speed of
convergence. Thus on the one hand we define for each $n\in\N$ the
discrepancy $\delta_n=\max_{i\in\{0,1\}}|P_i(\omega,n)-\mu([i])|$. On
the other hand, for each $n\in \N$ we define the parameters $\eta_n$
and $\varepsilon_n$ by
$\left|\frac1n\log\lvert\Gamma(\alpha,n)\rvert-\entropy(\alpha)\right|=\eta_n$
and
$\varepsilon_n=\max_{i\in\{0,1\}}\max_{\omega\in\Gamma(\nu,n)}|P_i(\omega)-\nu([i])|$,
respectively. Finally we combine these distances by setting
$\varrho_n=\max\{\varepsilon_n,\delta_n\}$. Note that $\varrho_n\to 0$
as $n\to \infty$.

Let $B_0^{(1)}$ consist only of the empty word $\epsilon$. We set the length
of the initial block as $1$ and choose $n_1$ such that for $\omega_1=\omega\vert_{1}$ 
(the first letter of the word $\omega$) we have
\begin{equation*}
 |P_k(\omega_1^{\odot n_1})-\mu([k])|\leq 2\varrho_{1},
\end{equation*}
for $k\in\{0,1\}$. For $1\leq i \leq n_1$, let $B_i^{(1)}=\{\omega_1^{\odot i}\}$. 
Denote by $\ell^{(1)}_i$ the length of the word in the set $B_i^{(1)}=\{\omega_1^{\odot i}\}$.
Then for $n_1 +1 \leq i < (p+1)n_1$ let 
\begin{equation*}
 \widetilde{B}^{(1)}_i=B^{(1)}_{i-1}\odot\Gamma(\nu,1)
\end{equation*}
and we choose $B^{(1)}_{i}\subseteq \widetilde{B}^{(1)}_i$ of maximal
cardinality such that each element has the same length~$\ell^{(1)}_i$.

Now we continue recursively. To this end we suppose that the sets $B^{(j-1)}_{i}$
with $0\leq i\leq(p+1)n_{j-1}-1$ have already been constructed. We set
\begin{equation*}
  \widetilde{B}^{(j)}_0=B^{(j-1)}_{(p+1)n_{j-1}-1}\odot\Gamma(\nu,{j-1}),
\end{equation*}
and choose $B^{(j)}_{0}\subseteq \widetilde{B}^{(j)}_0$ of maximal
cardinality such that each element has the same length denoted by
$\ell^{(j)}_0$.  Let $\omega_j=\omega|_{j}$. Choose $n_j>n_{j-1}$ such
that
\begin{equation*}
  |P_i(\widetilde{\omega}\odot\omega_j^{\odot n_j})-\mu([i])|\leq 2\varrho_{j},
\end{equation*}
for $i\in\{0,1\}$ for all $\widetilde{\omega}\in B^{(j)}_{0}$, and 
\begin{equation}\label{eq:particularly1p3}
 \frac{\ell^{(j)}_0}{n_jj}\to 0 \textnormal{ as } j\to \infty.
\end{equation}
For $1 \leq i \leq n_j$, let 
\begin{equation*}
B^{(j)}_i=B^{(j)}_0\odot\omega_j^{\odot i}.
\end{equation*}
Finally, for $n_j +1 \leq i < (p+1)n_j$ let 
\begin{equation*}
 \widetilde{B}^{(j)}_i=B^{(1)}_{i-1}\odot\Gamma(\nu,j)
\end{equation*}
and again we choose $B^{(j)}_{i}\subseteq \widetilde{B}^{(j)}_i$ of
maximal cardinality such that each element has the same length
$\ell^{(j)}_i$.

We may now define our set $G_p$ as the limit of the sets $B^{(j)}_i$,
\textit{i.e.}
\[G_p=\bigcap_{j\geq1}\bigcap_{i}\bigcup_{b\in B^{(j)}_i}[b].\]
By the construction we obtain that each element in $G_p$ has a unique
representation of the form
\begin{align*}
&\underbrace{\omega_1\odot\omega_1\odot\cdots\odot\omega_1}_{n_1
  \textnormal{ times}}\odot a_1^{(1)}\odot a_2^{(1)}\odot\cdots\odot
  a_{pn_1}^{(1)}\odot \\
&\underbrace{\omega_2\odot\omega_2\odot\cdots\odot\omega_2}_{n_2 \textnormal{
  times}}\odot a_1^{(2)}\odot a_2^{(2)}\odot\cdots\odot
  a_{pn_2}^{(2)}\odot\\
&\vdots\\
&\underbrace{\omega_j\odot\omega_j\odot\cdots\odot\omega_j}_{n_j \textnormal{ times}}\odot a_1^{(j)}\odot a_2^{(j)}\odot\cdots\odot a_{pn_j}^{(j)}\odot\cdots
\end{align*}
with $\omega_j=\omega\vert_j$ and $a_{k}^{(j)}\in \Gamma(\nu,{j})$ for
$1\leq k \leq pn_j$.

We finish this section by providing a lower bound for the number of elements in $B_i^{(j)}$.
\begin{lem}\label{lem:num_elem_estimate}
 We have the following:
\begin{align*}
 |B^{(j)}_i|&=|B^{(j)}_0| \textnormal{ for all } j \textnormal{ and } 1\leq i \leq n_j;\\
 |B^{(j)}_i|&\geq \exp\left(p\sum_{k=1}^{j-1}n_kk\left(\entropy-\eta_{k}-\frac{\log C}{k}\right)+(i-n_j)j\left(\entropy-\eta_{j}-\frac{\log C}{j}\right)\right)\\
& \quad \quad \quad \textnormal{ for all } j \textnormal{ and } n_j+1\leq i < pn_j;\\
 |B^{(j)}_0|&\geq \exp\left(p\sum_{k=1}^{j-1}n_kk\left(\entropy-\eta_{k}-\frac{\log C}{k}\right)\right) \textnormal{ for all } j\geq 1.
\end{align*}
\end{lem}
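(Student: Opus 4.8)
The plan is to track how the cardinality of the sets $B^{(j)}_i$ grows at each step of the recursive construction, using the two operations that build them: concatenation with a copy of the (fixed) word $\omega_j^{\odot \bullet}$, which does not change the cardinality, and concatenation with $\Gamma(\nu,k)$ followed by truncation to a common length, which multiplies the cardinality by roughly $\lvert\Gamma(\nu,k)\rvert$ (up to the bounded glue words $v$ of length at most $C$ coming from \textbf{H2}). First I would record the trivial identity $\lvert B^{(j)}_i\rvert=\lvert B^{(j)}_0\rvert$ for $1\le i\le n_j$: by construction $B^{(j)}_i=B^{(j)}_0\odot\omega_j^{\odot i}$, and since the appended word $\omega_j^{\odot i}$ is fixed, appending it (with the uniquely determined glue words, or at least choosing them consistently) is an injection $B^{(j)}_0\to B^{(j)}_i$ that is onto; hence the cardinalities agree. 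This disposes of the first displayed line.

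Next I would analyze one ``$\Gamma$-block''. When we pass from $B^{(j)}_{i-1}$ to $\widetilde B^{(j)}_i=B^{(j)}_{i-1}\odot\Gamma(\nu,j)$, distinct pairs $(b,\gamma)\in B^{(j)}_{i-1}\times\Gamma(\nu,j)$ give distinct concatenations $b\odot\gamma$ provided we fix, for each such pair, one admissible glue word $v$ with $\lvert v\rvert\le C$; thus $\lvert\widetilde B^{(j)}_i\rvert=\lvert B^{(j)}_{i-1}\rvert\cdot\lvert\Gamma(\nu,j)\rvert$. The subsequent step replaces $\widetilde B^{(j)}_i$ by a subset $B^{(j)}_i$ of elements all of a common length $\ell^{(j)}_i$. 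The words in $\widetilde B^{(j)}_i$ have lengths lying in an interval of size at most $C$ (the only source of length variation is the glue word), so the number of distinct possible lengths is at most $C$; choosing the most populous length class we retain at least a $1/C$ fraction, i.e. $\lvert B^{(j)}_i\rvert\ge \lvert\widetilde B^{(j)}_i\rvert / C=\lvert B^{(j)}_{i-1}\rvert\cdot\lvert\Gamma(\nu,j)\rvert / C$. By Proposition~\ref{ps:thm2.1}, $\frac1j\log\lvert\Gamma(\nu,j)\rvert=\entropy-\eta_j$ (up to the sign absorbed into $\eta_j$), so one $\Gamma(\nu,j)$-step contributes at least a factor $\exp\bigl(j(\entropy-\eta_j)-\log C\bigr)=\exp\bigl(j(\entropy-\eta_j-\tfrac{\log C}{j})\bigr)$ to the cardinality. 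The same computation applies to the step defining $B^{(j)}_0$ out of $B^{(j-1)}_{(p+1)n_{j-1}-1}\odot\Gamma(\nu,j-1)$.

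Now I would assemble the telescoping estimate. Between levels $j-1$ and $j$ there are exactly $p\,n_{j-1}$ many $\Gamma$-concatenation steps (the indices $n_{j-1}+1\le i<(p+1)n_{j-1}$, together with the step producing $B^{(j)}_0$), each appending a copy of $\Gamma(\nu,j-1)$ and each contributing the factor above with parameter $j-1$; the $n_{j-1}$ intervening ``fixed'' steps $B^{(j-1)}_0\odot\omega_{j-1}^{\odot i}$ contribute nothing. Iterating from the base case $\lvert B^{(1)}_0\rvert=1$ (the empty word), a straightforward induction on $j$ gives
\[
\lvert B^{(j)}_0\rvert\ \ge\ \exp\!\left(p\sum_{k=1}^{j-1}n_k k\Bigl(\entropy-\eta_k-\tfrac{\log C}{k}\Bigr)\right),
\]
which is the third displayed inequality. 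For the middle inequality, start from this bound on $\lvert B^{(j)}_0\rvert$, note $\lvert B^{(j)}_i\rvert=\lvert B^{(j)}_0\rvert$ for $1\le i\le n_j$, and then for $n_j+1\le i<(p+1)n_j$ apply the single-$\Gamma$-step bound with parameter $j$ exactly $i-n_j$ times, yielding the extra factor $\exp\bigl((i-n_j)j(\entropy-\eta_j-\tfrac{\log C}{j})\bigr)$.

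The one point requiring care — and the main obstacle — is the injectivity/disjointness bookkeeping: one must be sure that across all the $\odot$-concatenations the map from tuples of chosen building blocks to concatenated words is genuinely injective, so that cardinalities multiply rather than merely giving an upper bound. This is where it matters that each element of $G_p$ has the \emph{unique} representation displayed after the construction: the block lengths $\ell^{(j)}_i$ are fixed and known, so from a word of length $\ell^{(j)}_i$ one can read off the boundaries of each sub-block, and hence recover the constituent words $\omega_k$, the $a^{(k)}_m\in\Gamma(\nu,k)$, and the glue words. Once this unique-parsing property is invoked, all the steps above are forced equalities or the stated $1/C$-loss inequalities, and no further estimation is needed.
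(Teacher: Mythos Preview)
Your proposal is correct and is precisely the induction that the paper leaves implicit: the paper's proof is the single sentence ``It follows by induction from our construction and Proposition~\ref{ps:thm2.1},'' and what you have written is an accurate unpacking of that induction, including the key observations that the fixed $\omega_j^{\odot i}$-steps preserve cardinality, each $\Gamma(\nu,k)$-step multiplies by $\lvert\Gamma(\nu,k)\rvert\ge\exp(k(\entropy-\eta_k))$, and the common-length selection costs at most a factor $C$ per step. Your care about injectivity (recovering the constituent blocks from the fixed lengths $\ell^{(j)}_i$) is exactly the point the paper relies on when it asserts the unique representation of elements of $G_p$.
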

\begin{proof}
 It follows by induction from our construction and Proposition~\ref{ps:thm2.1}.
\end{proof}

\section{Particularly non-normal}

In this section we want to show that each element of $G_p$ is
particularly non-normal, \textit{i.e.} that there exist two digits
such that there is no limit frequency for the first one but certainly
there is one for the second one. Let $N_i(\omega)$ denote the number
of occurrences of a digit $i$ in a word $\omega$, and if $\omega$ is
finite write $P_i(\omega)$ to denote $P_i(\omega, |\omega|)$ the
frequency of occurrences of the digit $i$ in the word $\omega$.

We start by showing that for all $\omega\in G_p$ we have that
$P_1(\omega, m)\to \nu([1])=\mu([1])$ as $m \to \infty$.
Let $\omega \in G_p$ and $m\in \N$. Choose $j$ such that $\ell_0^{(j)}<m\leq \ell_0^{(j+1)}$.
First assume that $\ell_{n_j}^{(j)}<m$. In this case,
\begin{equation*}
\omega|_m=\underbrace{\omega_1\odot\cdots\odot\omega_j}_{\omega|_{\ell_{n_j}^{(j)}}}\odot a_1^{(j)}\odot\cdots\odot a_{k-1}^{(j)}\odot \widetilde{a}_{k}^{(j)},
\end{equation*}
with $0\leq k \leq pn_j$. Let $v_i$ be the connecting block inserted
before the word $a_i^{(j)}$.  Then we can write 
$m=\ell_{n_j}^{(j)}+(k-1)j+\sum_{i=1}^k|v_i|+|\widetilde{a}_{k}^{(j)}|$
and get that
\begin{equation*}
 P_1(\omega,m)=P_1(\omega,\ell_{n_j}^{(j)})\frac{\ell_{n_j}^{(j)}}{m}+\frac{j}{m}\sum_{i=1}^kP_1(a_i^{(j)},j)
+\frac{1}{m}\sum_{i=1}^k N_1(v_i)+\frac{N_1(\widetilde{a}_{k}^{(j)})}{m}.
\end{equation*}
Note that
\begin{align*}
 0\leq & \frac{1}{m}\sum_{i=1}^k N_1(v_i) 
\leq \frac{\sum_{i=1}^k|v_i|}{\ell_{n_j}^{(j)}+(k-1)j+\sum_{i=1}^k|v_i|+|\widetilde{a}_{k}^{(j)}|}
\leq \frac{kC}{(n_j+k-1)j}\xrightarrow[m\to\infty]{} 0\\
\intertext{and}
 0 \leq & \frac{N_1(\widetilde{a}_{k}^{(j)})}{m}  
\leq \frac{|\widetilde{a}_{k}^{(j)}|}{\ell_{n_j}^{(j)}+(k-1)j+\sum_{i=1}^k|v_i|+|\widetilde{a}_{k}^{(j)}|}
\leq \frac{j}{\ell_{n_j}^{(j)}} \leq \frac{1}{n_j}\xrightarrow[m\to\infty]{} 0.
\end{align*}
Finally, by the construction, we have
\begin{equation*}
 |P_1(\omega,\ell_{n_j}^{(j)})-\nu([1])|\leq 2\varrho_{j}, 
\end{equation*}
and for all $i$
\begin{equation*}
 |P_1(a_i^{(j)},j)-\nu([1])|\leq \varrho_{j}.
\end{equation*}
Therefore, we conclude that 
\begin{equation}\label{eq:particularly1p1}
 P_1(\omega,m) \to \nu([1]) \textnormal{ as } m\to \infty, \ell_{n_j}^{(j)}<m.
\end{equation}

Suppose now $m\leq\ell_{n_j}^{(j)}$. In this case, there exists a $k$
with $0\leq k<n_j$ such that
\begin{equation*}
\omega|_m=\underbrace{\omega_1\odot\cdots\odot \omega_{j-1}\odot\cdots\odot a_{pn_{j-1}}^{(j-1)}}_{\omega|_{\ell_{0}^{(j)}}}\odot
\underbrace{\omega_j\odot\cdots\odot \omega_j}_{\omega_j \textnormal{ repeated } k \textnormal{ times}}\odot\;\widetilde{\omega}_j.
\end{equation*}
Let $v_i$ be the connecting block inserted before the $i$th $\omega_j$ word. 
We have $m=\ell_{0}^{(j)}+kj+\sum_{i=1}^{k+1}|v_i|+|\widetilde{\omega}_j|$.
With this notation, we have
\begin{equation*}
 P_1(\omega,m)=P_1(\omega,\ell_{n_{j-1}}^{(j-1)})\frac{\ell_{n_{j-1}}^{(j-1)}}{m}+\frac{j}{m}\sum_{i=1}^{pn_{j-1}}P_1(a_i^{(j-1)},{j-1})
+\frac{kj}{m}P_1(\omega_j,j)+\frac{1}{m}\sum_{i=1}^{k+1} N_1(v_i)+\frac{N_1(\widetilde{\omega}_j)}{m}.
\end{equation*}
As before, we have
\begin{gather*}
\frac{1}{m}\sum_{i=1}^{k+1} N_1(v_i) \xrightarrow[m\to\infty]{} 0\quad\text{and}\quad
\frac{N_1(\widetilde{\omega}_j)}{m}\xrightarrow[m\to\infty]{} 0.
\end{gather*}
Again by construction
\begin{align*}
 |P_1(\omega,\ell_{n_{j-1}}^{(j-1)})-\nu([1])| 	& \leq 2\varrho_{{j-1}};\\
 |P_1(a_i^{(j-1)},{j-1}) - \nu([1])| 		& \leq \varrho_{{j-1}}\quad\text{for all } i;\\
 | P_1(\omega_j, j)-\nu([1])| 		& \leq \varrho_{j}.
\end{align*}
Therefore, we conclude that 
\begin{equation}\label{eq:particularly1p2}
 P_1(\omega,m) \to \nu([1]) \textnormal{ as } m\to \infty, m\leq \ell_{n_j}^{(j)}.
\end{equation}

From \eqref{eq:particularly1p1} and \eqref{eq:particularly1p2} we
conclude that $ P_1(\omega,m) \to \nu([1]) $ as $m \to \infty$ and
therefore the limiting frequency of the digit $1$ exists.

Now we show that the limiting frequency of digit $0$ does not
exist. On the one hand, we note that by our construction we have that
\begin{equation}\label{eq:particularly1p4}
P_0(\omega, \ell_{n_j}^{(j)}) \to \mu([0]) \textnormal{ as } j\to\infty.
\end{equation}
On the other hand, let $v_i$ be the connecting block inserted before
the word $a_i^{(j)}$. Then we have
\begin{equation*}
P_0(\omega, \ell_{0}^{(j+1)})=P_0(\omega,\ell_{n_j}^{(j)})\frac{\ell_{n_j}^{(j)}}{\ell_0^{(j+1)}}
+\frac{j}{\ell_0^{(j+1)}}\sum_{i=1}^{pn_j}P_0(a_{i}^{(j)},j)+\frac{1}{\ell_0^{(j+1)}}\sum_{i=1}^{pn_j} N_1(v_i).
\end{equation*}
As above we get that
\begin{align*}
\frac{1}{\ell_0^{(j+1)}}\sum_{i=1}^{pn_j} N_1(v_i)\to 0,\quad
P_0(\omega,\ell_{n_j}^{(j)})\to \mu([0])\quad\text{and}\quad
  P_0(a_{i}^{(j)},j)\to \nu([0])
\end{align*}
as $j\to \infty$. 
Finally, by \eqref{eq:particularly1p3}, we have
\[\frac{\ell_{n_j}^{(j)}}{\ell_0^{(j+1)}}\to \frac{1}{p}\quad\text{and}\quad\frac{pn_jj}{\ell_0^{(j+1)}}\to \frac{p}{p+1}.\] 
We conclude that
\begin{equation}\label{eq:particularly1p5}
P_0(\omega, \ell_{0}^{(j+1)}) \to
\frac{1}{p+1}\mu([0])+\frac{p}{p+1}\nu([0])<\mu([0])
\end{equation}
as $j\to\infty$. It follows from \eqref{eq:particularly1p4} and
\eqref{eq:particularly1p5} that the limiting frequency of the digit
$0$ in $\omega$ does not exist, proving that $\omega$ is indeed a
particularly non-normal number.

\section{Hausdorff dimension}

For the estimation of the Hausdorff dimension it suffices to consider
the cylinder sets.  The set $B_i^{(j)}$ belongs to the $i$th word of
the $j$th block. In the present section we want to ease notation by
saying the $n$th word is the $i$th word of the $j$th block. In
particular, for each $n$ there exist unique $i=i(n)$ and $j=j(n)$ such
that
\[n=(p+1)\sum_{k=1}^{j-1}n_k + i\]
with $1\leq i\leq (p+1)n_j$.  Similarly we set $B_n=B_i^{(j)}$,
\[y_n=y_i^{(j)}:=\min\{\nu([x])\colon x\in B_i^{(j)}\}\]
and
\[E_n=E_i^{(j)}:=\exp\left(p\sum_{k=1}^{j-1}n_kk\left(\entropy-\eta_{k}-\frac{\log
    C}{k}\right)+\sum_{k=n_b+1}^{i}j\left(\entropy-\eta_{j}-\frac{\log
    C}{j}\right)\right).\]

Then our first tool is the following lemma.
\begin{lem}\label{lem:cover_lower_bound}
Let $0<s\leq 1$ and let $\mathcal{D}$ be a cover of $G_p$ such that $\nu([w])<y_N$
for all $w\in\mathcal{D}$. Then there exists $n\geq N$ such that
\[
\sum_{w \in \mathcal{D}} \nu([w])^s
\geq E_ny_{n+1}^s.
\]
\end{lem}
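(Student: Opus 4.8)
The plan is a mass--distribution (Billingsley--type) estimate built on a probability measure adapted to the Cantor--like set $G_p$.

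\emph{Reduction and sandwiching.} The set $G_p=\bigcap_n\bigcup_{b\in B_n}[b]$ is a decreasing intersection of finite unions of cylinders, hence compact; since cylinders are open in the symbolic space one may replace $\mathcal{D}$ by a finite subcover and discard every $w$ with $[w]\cap G_p=\emptyset$. Write $\ell_n$ for the common length of the words of $B_n$; as $\ell_n\to\infty$, each $w\in\mathcal{D}$ has a well--defined index $n=n(w)$ with $\ell_n<|w|\le\ell_{n+1}$. One has $n(w)\ge N$: otherwise $|w|\le\ell_N$ and, taking $g\in[w]\cap G_p$ lying in $[b]$ with $b\in B_N$, we would get $[b]\subseteq[w]$ and $\nu([w])\ge\nu([b])\ge y_N$, contrary to the hypothesis on $\mathcal{D}$. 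For this $n$, the cylinder $[w]$ is sandwiched between the $n$th and $(n+1)$st generations of the tree: on the one hand $[w]\subseteq[b]$ with $b:=w|_{\ell_n}\in B_n$ (this $b$ lies in $B_n$ because $[w]\cap G_p\neq\emptyset$); on the other hand any $g\in[w]\cap G_p$ lies in some $[b']$, $b'\in B_{n+1}$, and then $b'$ extends $w$ (as $\ell_{n+1}\ge|w|$), so $[b']\subseteq[w]$. Consequently
\[
\nu([w])\ \ge\ \nu([b'])\ \ge\ y_{n+1}.
\]

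\emph{The adapted measure.} Let $\lambda$ be the probability measure carried by $G_p$ determined by the rule that at each branching of the construction the mass of a word is split equally among its immediate successors (in the subtree of words that reach $G_p$); then $\lambda\bigl(\bigcup_{b\in B_n}[b]\bigr)=1$ for every $n$ and $\lambda([b])=\prod_k r_k^{-1}$, the $r_k$ being the successor counts along the path to $b$. The crucial feature is that the per--generation factor $E_n/E_{n-1}$ encoded in $E_n$ bounds the branching of the tree from below: at the $\omega_j$--repetition steps it equals $1$, and at the $\Gamma(\nu,j)$ steps it equals $e^{j(\entropy-\eta_j)}/C\le|\Gamma(\nu,j)|/C$. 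Hence $\lambda([b])\le E_n^{-1}$ for every $b\in B_n$, and in particular $\lambda([w])\le\lambda([b])\le E_{n(w)}^{-1}$ for every $w\in\mathcal{D}$, with $b=w|_{\ell_{n(w)}}$ as above.

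\emph{Conclusion.} Combining the two facts above with $0<s\le1$, for every $w\in\mathcal{D}$ one gets
\[
\nu([w])^s\ \ge\ y_{n(w)+1}^{s}\ =\ E_{n(w)}\,y_{n(w)+1}^{s}\cdot E_{n(w)}^{-1}\ \ge\ E_{n(w)}\,y_{n(w)+1}^{s}\,\lambda([w]).
\]
Summing over the finite cover and using $\sum_{w\in\mathcal{D}}\lambda([w])\ge\lambda(G_p)=1$,
\[
\sum_{w\in\mathcal{D}}\nu([w])^s\ \ge\ \Bigl(\min_{w\in\mathcal{D}}E_{n(w)}\,y_{n(w)+1}^{s}\Bigr)\sum_{w\in\mathcal{D}}\lambda([w])\ \ge\ \min_{w\in\mathcal{D}}E_{n(w)}\,y_{n(w)+1}^{s}.
\]
Taking $n$ to be the value of $n(w)$ attaining this minimum, which satisfies $n\ge N$, yields $\sum_{w\in\mathcal{D}}\nu([w])^s\ge E_n\,y_{n+1}^{s}$, as required.

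\emph{The main obstacle} is the inequality $\lambda([b])\le E_n^{-1}$, i.e.\ that every word of the $n$th generation of the construction has at least $E_n/E_{n-1}$ successors. The repetition steps are trivial; the delicate case is the $\Gamma(\nu,j)$ steps, where the connecting words $v$ in $a\odot b$ may have different lengths for different pairs, so that after passing to a maximal equal--length sub--family of $\widetilde B^{(j)}_i$ one must verify that no single word loses more than a factor $C$ of its potential successors --- this is precisely what the correction terms $\tfrac{\log C}{k}$ in the definition of $E_n$ and in Lemma~\ref{lem:num_elem_estimate} are designed to absorb. (If \textbf{H2} is strengthened so that connecting words can always be chosen of one fixed length --- harmless for the sofic systems of interest --- no pruning is needed and the branching at a $\Gamma(\nu,j)$ step is exactly $|\Gamma(\nu,j)|\ge E_n/E_{n-1}$.)
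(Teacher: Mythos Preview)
Your argument is a genuine alternative to the paper's: you run a Frostman--type mass distribution argument with an auxiliary measure $\lambda$ on $G_p$, whereas the paper carries out an iterative pigeonhole descent through the tree of prefixes. Concretely, the paper fixes the level $N$, groups $\mathcal{D}$ by its $B_N$-prefixes, passes to the prefix $\tilde v\in B_N$ with the smallest contribution, and repeats---picking up one branching factor $|P_{N+1}|\ge E_{N+1}/E_N$ per step until a terminal level $n$ is reached. Your approach packages the same branching factors into $\lambda([b])\le E_n^{-1}$ and reads off the bound in one stroke via $\sum_w\lambda([w])\ge 1$. The two are standard companions; your version is shorter and makes the role of the level $n(w)$ explicit, while the paper's iteration avoids having to name and construct $\lambda$.

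One point deserves care. Both arguments hinge on a \emph{per-parent} branching lower bound: every $b\in B_{i-1}^{(j)}$ must have at least $E_n/E_{n-1}\asymp|\Gamma(\nu,j)|/C$ successors in $B_i^{(j)}$ (this is your ``main obstacle'', and in the paper it is the unproved assertion that $|P_{N+1}|\ge E_{N+1}/E_N$). Your suggested justification---that the $\tfrac{\log C}{k}$ corrections in $E_n$ and in Lemma~\ref{lem:num_elem_estimate} absorb this---is not quite right: those corrections come from a \emph{global} pigeonhole on the $\le C{+}1$ possible lengths, giving $|B_i^{(j)}|\ge|\widetilde B_i^{(j)}|/(C{+}1)$, but the length class that maximises the total count need not be the one in which a \emph{given} parent has many children; with only \textbf{H2} as stated, an individual parent could in principle lose all (or almost all) of its successors when one restricts to the chosen length $\ell_i^{(j)}$. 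Your parenthetical fix---strengthening \textbf{H2} to specification with connecting words of a fixed length (valid for the sofic/$\beta$-shift applications)---is the clean way to close this; alternatively one can rebuild $B_i^{(j)}$ so as to keep, for each parent separately, a length class with $\ge|\Gamma(\nu,j)|/(C{+}1)$ children. Modulo this shared subtlety, your proof is correct.
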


\begin{proof}
Since $\nu([w])<y_N$ for all $w\in\mathcal{D}$, each $w\in\mathcal{D}$ has a prefix
in $B_N$, \textit{i.e.} $w=vu$ with $v\in B_N$. Moreover, since $\mathcal{D}$ is a cover, every $v\in B_N$ has to appear as prefix. Thus we may write
\[\sum_{w\in\mathcal{D}}\nu([w])^s=\sum_{v\in B_N}\sum_{vu\in\mathcal{D}}\nu([vu])^s.\]
Now let $\tilde{v}\in B_N$ be such that for all $v\in B_N$
\[\sum_{u\colon vu\in\mathcal{D}}\nu([vu])^s
  \geq\sum_{u\colon \tilde{v}u\in\mathcal{D}}\nu([\tilde{v}u])^s.\]
We have
\[\sum_{w\in\mathcal{D}}\nu([w])^s
  \geq\lvert B_N\rvert\sum_{u\colon\tilde{v}u\in\mathcal{D}}\nu([\tilde{v}u])^s.\]
Now, either
\[\sum_{u\colon \tilde{v}u}\nu([\tilde{v}u])^s\geq
\left(y_{N+1}\right)^s,\quad\text{or}\quad 
\sum_{u\colon \tilde{v}u}\nu([\tilde{v}u])^s<\left(y_{N+1}\right)^s.\]
If the first case holds true, then an application of Lemma
\ref{lem:num_elem_estimate} yields
\[\sum_{w\in \mathcal{D}}\nu([w])^s
  \geq \exp\left(p\sum_{k=1}^{j-1}n_kk\left(\entropy-\eta_{k}-\frac{\log C}{k}\right)+\sum_{k=n_j+1}^ij\left(\entropy-\eta_{j}-\frac{\log C}{j}\right)\right)\\
  y_{N+1}^s.\]
On the other hand, if the latter case is true, then let
$P_{N+1}:=\{v\in B_{N+1}\colon \tilde{v}\text{ is a prefix
  of }v\}$. Now every $u$ with $\tilde{v}u\in\mathcal{D}$ can be
written as $\tilde{v}u=v'u'$, where $v'\in P_{N+1}$. Since
$\mathcal{D}$ is a cover of $G_p$, all prefixes $v'\in P_{N+1}$
occur in the decomposition of $\tilde{v}u=v'u'$. Thus by construction
of $B_{N+1}$ we either have that
\[\lvert
P_{N+1}\rvert\geq\exp(p{j(N+1)}\left(\entropy-\eta_{{j(N+1)}})\right)\]
if $i(N+1)>n_{j(N+1)}$ or $\lvert P_{N+1}\rvert =1$ otherwise.
Similar to above we may choose $\tilde{v}'\in P_{N+1}$ such that
for all $v'\in P_{N+1}$,
\[\sum_{u'\colon v'u'\in\mathcal{D}}\nu([v'u'])^s
  \geq \sum_{u'\colon \tilde{v}'u'\in\mathcal{D}}
  \nu([\tilde{v}'u'])^s.\]

Thus combining this with the estimate above yields
\begin{align*}
\sum_{w\in\mathcal{D}}\nu([w])^s
&\geq E_n \sum_{u\colon \tilde{v}u\in\mathcal{D}}\nu([\tilde{v} u])^s\\
&=E_n \sum_{v'\in P_{N+1}} \sum_{u'\colon v'u'\in\mathcal{D}}\nu([v' u'])^s\\
&\geq E_{n+1} \sum_{u'\colon \tilde{v}'u'\in\mathcal{D}}\nu([\tilde{v}' u'])^s.
\end{align*}

Iterating this argument yields (after finitely many steps since
$\mathcal{D}$ is finite) that
\[
\sum_{w\in\mathcal{D}}\nu([w])^s
\geq E_n y_{n+1}^s.\qedhere\]
\end{proof}

The second ingredient is the following lemma of Pfister and Sullivan
\cite{pfister_sullivan2003:billingsley_dimension_on}.
\begin{lem}[{\cite[Lemma 3.1 and Corollary 3.1]{pfister_sullivan2003:billingsley_dimension_on}}]\label{ps:lem3.1}
Let $\nu$ be a probability measure which possesses a continuous
strictly positive regular conditional probability
$\nu(\omega_1|\omega_2,\omega_3,\ldots)$. Then,
\[e_\nu(\omega):=-\log\nu(\omega_1|\omega_2,\omega_3,\ldots)\]
satisfies \eqref{ps:eq2.1}. If $\nu$ satisfies \eqref{ps:eq2.1}, then,
for each $\delta>0$, there exist $m_\delta$ and $N_\delta\in\N$ such
that for all $n\geq N_\delta$ and for all $\omega\in\Sigma^\nu$
\[\lvert\langle e_\nu,T_n(\omega)\rangle+\frac1n\log\nu([\omega_1^n])\rvert<\delta.\]
Furthermore, we have
\[\entropy(\nu)
=\langle e_\nu,\nu\rangle.\]
\end{lem}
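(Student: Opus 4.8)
The statement is Lemma 3.1 together with Corollary 3.1 of Pfister and Sullivan \cite{pfister_sullivan2003:billingsley_dimension_on}, and I would reproduce their argument, whose core is a chain-rule telescoping combined with the uniform continuity of $e_\nu$ on the compact full shift $\mathcal{A}^{\N}$. The plan for \eqref{ps:eq2.1} is as follows. For an admissible word $\omega|_n=\omega_1\cdots\omega_n$ write
\[
  -\frac1n\log\nu([\omega_1^n])
  =\frac1n\sum_{k=1}^{n}\Bigl(-\log\nu(\omega_k\mid\omega_{k+1},\ldots,\omega_n)\Bigr),
\]
with the convention that the $k=n$ term is $-\log\nu([\omega_n])$. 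Each summand is a finite-horizon truncation of $e_\nu(S^{k-1}\omega)=-\log\nu(\omega_k\mid\omega_{k+1},\omega_{k+2},\ldots)$. Since the regular conditional probabilities are continuous and strictly positive, $e_\nu$ is continuous and bounded on the compact space $\mathcal{A}^{\N}$, hence uniformly continuous; thus for every $\rho>0$ there is a horizon $m=m(\rho)$ so that the $k$th summand differs from $e_\nu(S^{k-1}\omega)$ by at most $\rho$ whenever $n-k\geq m$. The at most $m$ remaining ``late'' terms are individually bounded by a constant (again by compactness), so together they contribute $O(m/n)$. Summing gives $\bigl|-\tfrac1n\log\nu([\omega_1^n])-\langle e_\nu,T_n(\omega)\rangle\bigr|\leq\rho+O(m/n)$ uniformly in $\omega\in\mathcal{L}$; letting $n\to\infty$ and then $\rho\to0$ yields \eqref{ps:eq2.1}.

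The second assertion is a quantitative rephrasing of this: given $\delta>0$, apply the estimate just obtained with $\rho=\delta/2$, choose $N_\delta$ so that the $O(m/n)$ term is below $\delta/2$ for all $n\geq N_\delta$, and set $m_\delta=m(\delta/2)$; restricting $\omega$ to the support $\Sigma^\nu$ only decreases the supremum. For the identity $\entropy(\nu)=\langle e_\nu,\nu\rangle$ I would integrate the uniform estimate of the first part against $\nu$: by shift-invariance of $\nu$ we have $\int\langle e_\nu,T_n(\omega)\rangle\,d\nu(\omega)=\langle e_\nu,\nu\rangle$ for every $n$, while $-\tfrac1n\int\log\nu([\omega_1^n])\,d\nu(\omega)$ is, up to the usual lower-order correction, the $n$th term defining the Shannon entropy and therefore tends to $\entropy(\nu)$ (equivalently, by the Shannon--McMillan--Breiman theorem). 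Passing to the limit in the (uniform, hence $L^1(\nu)$-controlled) estimate gives the claimed equality.

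The one step requiring genuine care is the \emph{uniformity in $\omega$} of the truncation estimate: it is precisely here that compactness of $\mathcal{A}^{\N}$ and \emph{continuity} (not merely measurability) of the conditional probabilities enter, and it is this uniformity that lets the $\limsup$ over all of $\mathcal{L}$ in \eqref{ps:eq2.1} vanish rather than merely a $\nu$-almost everywhere statement. The control of the $O(m/n)$ boundary terms and the final integration are then routine.
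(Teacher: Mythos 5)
Your argument is correct, and I have no substantive objection: the telescoping of $-\log\nu([\omega_1^n])$ into finite-horizon conditional log-probabilities, the use of compactness and continuity (plus strict positivity) of the regular conditional probability to get a \emph{uniform} truncation error $\rho+O(m/n)$ over all of $\mathcal{L}$, and the integration against $\nu$ using shift-invariance for the entropy identity is precisely the mechanism of Pfister--Sullivan's Lemma 3.1 and Corollary 3.1. Note that the paper itself offers no proof of this statement --- it is imported verbatim from the cited reference --- so your reconstruction is the relevant comparison, and it faithfully reproduces that source's approach, including the key observation that uniformity (rather than a $\nu$-a.e.\ statement) is what makes the $\limsup$ in \eqref{ps:eq2.1} taken over all admissible words vanish.
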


Now we are able to calculate the Hausdorff dimension. 

\noindent \textit{Proof of Theorem \ref{thm:HausdorffDim}}.
We suppose that $s<\frac{p}{p+1}$. Then by our construction
\begin{align*}
\ell_nT_{\ell_n}(\omega)
&=\sum_{k=1}^{j-1}n_kkT_{k}(\omega_k)+\sum_{k=1}^{j-1}\sum_{m=1}^{pn_k}kT_{k}\left(
  a^{(k)}_m\right)\\
&\quad+i j T_{j}(\omega_j)+
  \sum_{m=1}^{i-n_j}jT_{j}\left( a^{(j)}_m\right) +\mathcal{O}\left(\sum_{k=1}^{j-1}n_k(p+1)+i\right).
\end{align*}

Similar to the proof of $\omega$ being particularly non-normal we need
to distinguish two cases according to whether $0\leq i< n_j$ or
$n_j\leq i<n_j(p+1)$. For the first case we have
\begin{multline*}
\lVert T_{\ell_n}(\omega)
-\sum_{k=1}^j\frac{n_kk}{\ell_n}T_{k}(\omega_k)
-\frac{ij}{\ell_n}T_{j}(\omega_j)
-\sum_{k=1}^{j-1}\frac{pn_kk}{\ell_n}\nu\rVert\\
\leq
\frac1{\ell_n}\sum_{k=1}^{j-1}\sum_{m=1}^{pn_k}k\lVert T_{k}(a_m^{(k)})-\nu\rVert
\leq
\sum_{k=1}^{j-1}\frac{pn_kk}{\ell_n}\rho_{k}
\leq \frac{p}{p+1}\sum_{k=1}^j\rho_{k}.
\end{multline*}

Using the lower bound of Lemma \ref{lem:cover_lower_bound} together
with an application of Lemma \ref{ps:lem3.1} yields that for each $\delta>0$
there exists $N_\delta$ such that for $n\geq N_\delta$ we have
\begin{multline*}
p\sum_{k=1}^{j-1}n_kk\left(\entropy(\nu)-\eta_{k}\right)+s\log y_{n+1}\\
\geq\sum_{k=1}^{j-1}n_kk\entropy(\nu)\left(p-(p+1)s\right)
  -p\sum_{k=1}^{j-1}n_kk\eta_{k}
  -s(i+1)j\langle e_\nu,T_{j}(\omega_j)\rangle
  -s\delta\ell_{n+1}.
\end{multline*}

For the second case, $n_j\leq i<n_j(p+1)$, we get by similar means that
\[
\lVert T_{\ell_n}(\omega)
-\sum_{k=1}^j\frac{n_kk}{\ell_n}T_{k}(\omega_k)
-\sum_{k=1}^{j-1}\frac{pn_kk}{\ell_n}\nu
-\frac{(i-n_j)j}{\ell_n}\nu\rVert
\leq p\sum_{k=1}^{j-1}\frac{n_kk}{\ell_n}\rho_{k}+\frac{(i-n_j)j}{\ell_n}\eta_{j}.
\]

Again using the lower bound of Lemma \ref{lem:cover_lower_bound}
together with Lemma \ref{ps:lem3.1} we get that for each $\delta>0$
there exists $N_\delta$ such that for $n\geq N_\delta$ we have
\begin{multline*}
p\sum_{k=1}^{j-1}n_kk\left(\entropy(\nu)-\eta_{k}\right)+s\log y_{n+1}\\
\geq\sum_{k=1}^{j-1}n_kk\entropy(\nu)\left(p-(p+1)s\right)
  -p\sum_{k=1}^{j-1}n_kk\eta_{k}
  -sn_jj\langle e_\nu,T_{j}(\omega_j)\rangle
  -s(i+1-n_j)j\entropy(\nu)
  -s\delta\ell_{n+1}.
\end{multline*}

Thus in both cases by taking $\delta>0$ sufficiently small and noting that
$\sum_{k=1}^{j-1}n_kk/\ell_n=\frac{p}{p+1}$ we get that
\[\liminf_n E_ny_{n+1}^s=\infty,\]
provided that $s<\frac{p}{p+1}$. Thus
$\dim_H(G_p) \geq \frac{p}{p+1}$.  Letting $p \to \infty$ shows that
the Hausdorff dimension of the set of particularly non-normal numbers
is greater or equal to 1.  Since it cannot exceed 1, it completes the
proof of Theorem~\ref{thm:HausdorffDim}. \hfill \qedsymbol

\section*{Acknowledgment}

For the realization of the present paper the first author received
support from the Conseil R\'egional de Lorraine. Parts of this
research work were done when the authors were mutually visiting the
Institut Élie Cartan de Lorrain of the University of Lorraine and the
Max Planck Institute for Mathematics. The authors thank the
institutions for their hospitality.

\begin{bibdiv}
\begin{biblist}

\bib{albeverio_pratsiovytyi_torbin2005:singular_probability_distributions}{article}{
      author={Albeverio, Sergio},
      author={Pratsiovytyi, Mykola},
      author={Torbin, Grygoriy},
       title={Singular probability distributions and fractal properties of sets
  of real numbers defined by the asymptotic frequencies of their {$s$}-adic
  digits},
        date={2005},
        ISSN={0041-6053},
     journal={Ukra\"\i n. Mat. Zh.},
      volume={57},
      number={9},
       pages={1163\ndash 1170},
         url={http://dx.doi.org/10.1007/s11253-006-0001-0},
      review={\MR{2216038 (2006k:11156)}},
}

\bib{albeverio_pratsiovytyi_torbin2005:topological_and_fractal}{article}{
      author={Albeverio, Sergio},
      author={Pratsiovytyi, Mykola},
      author={Torbin, Grygoriy},
       title={Topological and fractal properties of real numbers which are not
  normal},
        date={2005},
        ISSN={0007-4497},
     journal={Bull. Sci. Math.},
      volume={129},
      number={8},
       pages={615\ndash 630},
         url={http://dx.doi.org/10.1016/j.bulsci.2004.12.004},
      review={\MR{2166730 (2006g:28018)}},
}

\bib{bertrand-mathis1988:points_generiques_de}{article}{
      author={Bertrand-Mathis, Anne},
       title={Points g\'en\'eriques de {C}hampernowne sur certains syst\`emes
  codes; application aux {$\theta$}-shifts},
        date={1988},
        ISSN={0143-3857},
     journal={Ergodic Theory Dynam. Systems},
      volume={8},
      number={1},
       pages={35\ndash 51},
         url={http://dx.doi.org/10.1017/S0143385700004302},
      review={\MR{939059 (89d:94032)}},
}

\bib{borel1909:les_probabilites_denombrables}{article}{
      author={Borel, E.},
       title={{Les probabilit\'es d\'enombrables et leurs applications
  arithm\'etiques.}},
    language={French},
        date={1909},
     journal={Palermo Rend.},
      volume={27},
       pages={247\ndash 271},
}

\bib{bugeaud2012:distribution_modulo_one}{book}{
      author={Bugeaud, Yann},
       title={Distribution modulo one and {D}iophantine approximation},
      series={Cambridge Tracts in Mathematics},
   publisher={Cambridge University Press},
     address={Cambridge},
        date={2012},
      volume={193},
        ISBN={978-0-521-11169-0},
         url={http://dx.doi.org/10.1017/CBO9781139017732},
      review={\MR{2953186}},
}

\bib{dajani_kraaikamp2002:ergodic_theory_numbers}{book}{
      author={Dajani, K.},
      author={Kraaikamp, C.},
       title={Ergodic theory of numbers},
      series={Carus Mathematical Monographs},
   publisher={Mathematical Association of America},
     address={Washington, DC},
        date={2002},
      volume={29},
        ISBN={0-88385-034-6},
      review={\MR{1917322 (2003f:37014)}},
}

\bib{denker_grillenberger_sigmund1976:ergodic_theory_on}{book}{
      author={Denker, Manfred},
      author={Grillenberger, Christian},
      author={Sigmund, Karl},
       title={Ergodic theory on compact spaces},
      series={Lecture Notes in Mathematics, Vol. 527},
   publisher={Springer-Verlag},
     address={Berlin},
        date={1976},
      review={\MR{0457675 (56 \#15879)}},
}

\bib{drmota_tichy1997:sequences_discrepancies_and}{book}{
      author={Drmota, M.},
      author={Tichy, R.~F.},
       title={Sequences, discrepancies and applications},
      series={Lecture Notes in Mathematics},
   publisher={Springer-Verlag},
     address={Berlin},
        date={1997},
      volume={1651},
        ISBN={3-540-62606-9},
      review={\MR{MR1470456 (98j:11057)}},
}

\bib{kuipers_niederreiter1974:uniform_distribution_sequences}{book}{
      author={Kuipers, L.},
      author={Niederreiter, H.},
       title={Uniform distribution of sequences},
   publisher={Wiley-Interscience [John Wiley \& Sons]},
     address={New York},
        date={1974},
        note={Pure and Applied Mathematics},
      review={\MR{MR0419394 (54 \#7415)}},
}

\bib{lewis_pfister_russell+1997:reconstruction_sequences_and}{article}{
      author={Lewis, John~T.},
      author={Pfister, Charles~E.},
      author={Russell, Raymond~P.},
      author={Sullivan, Wayne~G.},
       title={Reconstruction sequences and equipartition measures: an
  examination of the asymptotic equipartition property},
        date={1997},
        ISSN={0018-9448},
     journal={IEEE Trans. Inform. Theory},
      volume={43},
      number={6},
       pages={1935\ndash 1947},
         url={http://dx.doi.org/10.1109/18.641557},
      review={\MR{1481050 (99b:94013)}},
}

\bib{lind_marcus1995:introduction_to_symbolic}{book}{
      author={Lind, Douglas},
      author={Marcus, Brian},
       title={An introduction to symbolic dynamics and coding},
   publisher={Cambridge University Press},
     address={Cambridge},
        date={1995},
        ISBN={0-521-55124-2; 0-521-55900-6},
         url={http://dx.doi.org/10.1017/CBO9780511626302},
      review={\MR{1369092 (97a:58050)}},
}

\bib{madritsch2014:non_normal_numbers}{article}{
      author={Madritsch, M.~G.},
       title={Non-normal numbers with respect to {M}arkov partitions},
        date={2014},
        ISSN={1078-0947},
     journal={Discrete Contin. Dyn. Syst.},
      volume={34},
      number={2},
       pages={663\ndash 676},
}

\bib{madritsch_petrykiewicz2014:non_normal_numbers}{article}{
      author={Madritsch, Manfred~G.},
      author={Petrykiewicz, Izabela},
       title={Non-normal numbers in dynamical systems fulfilling the
  specification property},
        date={2014},
        ISSN={1078-0947},
     journal={Discrete Contin. Dyn. Syst.},
      volume={34},
      number={11},
       pages={4751\ndash 4764},
         url={http://dx.doi.org/10.3934/dcds.2014.34.4751},
      review={\MR{3223828}},
}

\bib{olsen2004:applications_multifractal_divergence02}{article}{
      author={Olsen, L.},
       title={Applications of multifractal divergence points to sets of numbers
  defined by their {$N$}-adic expansion},
        date={2004},
        ISSN={0305-0041},
     journal={Math. Proc. Cambridge Philos. Soc.},
      volume={136},
      number={1},
       pages={139\ndash 165},
         url={http://dx.doi.org/10.1017/S0305004103007047},
      review={\MR{2034019 (2004j:11090)}},
}

\bib{olsen2004:applications_multifractal_divergence01}{article}{
      author={Olsen, L.},
       title={Applications of multifractal divergence points to some sets of
  {$d$}-tuples of numbers defined by their {$N$}-adic expansion},
        date={2004},
        ISSN={0007-4497},
     journal={Bull. Sci. Math.},
      volume={128},
      number={4},
       pages={265\ndash 289},
         url={http://dx.doi.org/10.1016/j.bulsci.2004.01.003},
      review={\MR{2052170 (2005a:28019)}},
}

\bib{olsen_winter2003:normal_and_non}{article}{
      author={Olsen, L.},
      author={Winter, S.},
       title={Normal and non-normal points of self-similar sets and divergence
  points of self-similar measures},
        date={2003},
        ISSN={0024-6107},
     journal={J. London Math. Soc. (2)},
      volume={67},
      number={1},
       pages={103\ndash 122},
         url={http://dx.doi.org/10.1112/S0024610702003630},
      review={\MR{1942414 (2003i:28009)}},
}

\bib{parry1960:eta_expansions_real}{article}{
      author={Parry, W.},
       title={On the {$\beta $}-expansions of real numbers},
        date={1960},
        ISSN={0001-5954},
     journal={Acta Math. Acad. Sci. Hungar.},
      volume={11},
       pages={401\ndash 416},
      review={\MR{0142719 (26 \#288)}},
}

\bib{pfister_sullivan2003:billingsley_dimension_on}{article}{
      author={Pfister, C.-E.},
      author={Sullivan, W.~G.},
       title={Billingsley dimension on shift spaces},
        date={2003},
        ISSN={0951-7715},
     journal={Nonlinearity},
      volume={16},
      number={2},
       pages={661\ndash 682},
         url={http://dx.doi.org/10.1088/0951-7715/16/2/317},
      review={\MR{1959103 (2003m:37012)}},
}

\bib{renyi1957:representations_real_numbers}{article}{
      author={R{\'e}nyi, A.},
       title={Representations for real numbers and their ergodic properties},
        date={1957},
        ISSN={0001-5954},
     journal={Acta Math. Acad. Sci. Hungar},
      volume={8},
       pages={477\ndash 493},
      review={\MR{0097374 (20 \#3843)}},
}

\bib{sigmund1970:generic_properties_invariant}{article}{
      author={Sigmund, Karl},
       title={Generic properties of invariant measures for {A}xiom {${\rm A}$}\
  diffeomorphisms},
        date={1970},
        ISSN={0020-9910},
     journal={Invent. Math.},
      volume={11},
       pages={99\ndash 109},
      review={\MR{0286135 (44 \#3349)}},
}

\end{biblist}
\end{bibdiv}

\end{document}